\newtheorem{theorem}{Theorem}[section]
\newtheorem*{theorem*}{Theorem}
\newtheorem{lemma}[theorem]{Lemma}
\newtheorem*{lemma*}{Lemma}
\numberwithin{equation}{section}
\def\1z{\mathbb{1}}
\newcommand{\ev}{ev}
\subjclass[2020]{Primary: 46L07, 47A20; Secondary: 46L52, 47L55.}
\keywords{Choquet boundary, hyperrigidity, boundary representation, unique extension property, dilation}
\begin{document}

\title[Arveson's hyprrigidity conjecture is false]{Arveson's hyperrigidity conjecture \\ is false}

	        \author{Boris Bilich}
        \address{Department of Mathematics, University of Haifa, Haifa, Israel, and Department of Mathematics, University of G\"ottingen, G\"ottingen, Germany.}
        \email{bilichboris1999@gmail.com}
        
        \author{Adam Dor-On}
        \address{Department of Mathematics, University of Haifa, Haifa, Israel.}
        \email{adoron.math@gmail.com}
				
				\thanks{A. Dor-On was partially supported by a Horizon Marie-Curie SE project no. 101086394 and a DFG Middle-Eastern collaboration project no. 529300231.}

\begin{abstract}
Arveson's hyperrigidity conjecture predicts that if the non-commutative Choquet boundary of a separable operator system $\mathcal{S}$ is the entire spectrum of its generated C*-algebra $\mathcal{B}$ then $\mathcal{S}$ is hyperrigid in $\mathcal{B}$. We provide a counterexample to the conjecture with a C*-algebra $\mathcal{B}$ of type I generated by a single operator.
\end{abstract}

\maketitle

\section{Introduction}

Arveson's hyperrigidity conjecture is the last major open conjecture in non-commutative boundary theory left by Arveson, following his foundational work on the subject \cite{Arv69, Arv72, Arv98, Arv08}. It lies at the epicenter of several advancements in non-commutative analysis \cite{DK15, Clo18a, DK+, KS22, DP22}, and is intimately related to the Arveson-Douglas essential normality conjecture \cite{Arv02, Dou06, KS15, CH18, CT21}. Many authors have attempted to prove the hyperrigidity conjecture, for instance when the generated C*-algebra is of type I \cite{Kle14}, or when it is commutative \cite{DK21}. Arveson himself provided a local version of the conjecture when the generated C*-algebra is commutative \cite[Theorem 11.1]{Arv11}, which was later generalized in \cite[Theorem 1.4]{Clo18a}. Despite significant progress made towards establishing the conjecture \cite{DS18, Clo18b, Kim21, CS+, Tho+}, it remained open until now even for type I C*-algebras and continues to be a source of perplexity when the C*-algebra is commutative.

Let $G$ be a separable subset of bounded operators on Hilbert space generating a norm-closed self-adjoint unital subspace $\mathcal{S}$, which in turn generates a C*-algebra $\mathcal{B}$. Arveson's hyperrigidity conjecture \cite[Conjecture 4.3]{Arv11} states that if the restriction to $\mathcal{S}$ of all irreducible $*$-representations of $\mathcal{B}$ have unique completely positive extensions to $\mathcal{B}$, then $G$ is \emph{hyperrigid} in the sense that for every faithful $*$-representation on Hilbert space $\mathcal{B}\subseteq \mathbb{B}(\mathcal{H})$, and a sequence $\{\phi_n\}_{n \in \mathbb{N}}$ of unital completely positive maps on $\mathbb{B}(\mathcal{H})$ we have,
$$
\phi_n(g) \rightarrow g, \ \forall g\in G \ \implies \ \phi_n(b) \rightarrow b, \ \forall b \in \mathcal{B}.
$$

In this paper we disprove Arveson's hyperrigidity conjecture. Although the conjecture was made in terms of operator systems, we may restrict our attention to unital operator algebras (see Section \ref{s:background}).

\vspace{6pt}

\begin{theorem} \label{t:hr-fails}
There is an operator $T$ on Hilbert space such that the the unital operator algebra $\mathcal{A}$ generated by $T$ is not hyperrigid in $\mathcal{B} = C^*(\mathcal{A})$, while the restriction to $\mathcal{A}$ of all irreducible $*$-representations of $\mathcal{B}$ have unique completely positive extensions to $\mathcal{B}$. In fact, $T$ is a rank-one perturbation of a unitary, so that with $G=\mathcal{A}$ we get a counterexample to Arveson's hyperrigidity conjecture with $\mathcal{B}$ of type I.
\end{theorem}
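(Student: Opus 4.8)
The plan is to construct a concrete operator $T$ realizing the counterexample. Since hyperrigidity can be rephrased in terms of the unique extension property (UEP) for the restriction of $*$-representations, and since rank-one perturbations of unitaries generate type I C*-algebras whose spectrum is easy to control, I would look for a unitary $U$ on a separable Hilbert space together with a rank-one operator $K$ so that $T = U + K$ has the following two features: (a) every irreducible $*$-representation of $\cB = C^*(T)$ restricts to $\cA = \overline{\text{alg}}(1,T)$ with the unique extension property — equivalently, every such irreducible representation is a boundary representation for $\cA$, so that the non-commutative Choquet boundary is all of $\widehat{\cB}$; yet (b) there is a unital completely positive (indeed a sequence of u.c.p.) map, or equivalently a dilation-theoretic obstruction, witnessing that $\cA$ is \emph{not} hyperrigid in $\cB$. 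Concretely I would take $U$ to be (a direct integral or direct sum model of) the bilateral shift, or a diagonalizable unitary with carefully chosen spectrum, perturb by a rank-one $K$ supported so that $T^*T$ and $TT^*$ differ by rank one, and compute $C^*(T)$ explicitly: it will be an extension of a commutative C*-algebra (functions on the unitary's spectrum) by the compacts, or a subhomogeneous type I algebra.

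The key steps, in order: first, write down $T = U + K$ explicitly and identify $\cB = C^*(T)$ as a concrete type I C*-algebra, listing all its irreducible representations — typically a one-parameter family of characters or low-dimensional representations coming from the "unitary part" together with possibly the identity representation on the perturbed piece. Second, verify (a): show that each irreducible representation $\pi$ of $\cB$, when restricted to $\cA$, has the unique extension property. The natural tool here is Arveson's boundary-representation machinery together with the earlier-mentioned results — for type I algebras one can often check the UEP representation-by-representation using that a u.c.p. extension must be multiplicative on the boundary, or by exhibiting, for each $\pi$, a sufficiently rich supply of elements of $\cA$ on which no proper dilation can act trivially. Third, verify (b): exhibit an explicit faithful representation $\cB \subseteq \Bz(\cH)$ and a sequence $\{\phi_n\}$ of u.c.p. maps on $\Bz(\cH)$ with $\phi_n(a) \to a$ for all $a \in \cA$ but $\phi_n(b) \not\to b$ for some $b \in \cB$; the cleanest way is to produce a single u.c.p. map $\phi$ that fixes $\cA$ pointwise (a "$\cA$-bimodule projection"-type map, or the restriction of a non-trivial dilation) but is not multiplicative on all of $\cB$, then note this already contradicts the Korovkin-type/hyperrigidity conclusion after a standard reduction. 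A compactness/approximation argument upgrades this to the sequential statement if needed.

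The main obstacle I anticipate is reconciling (a) and (b): the very condition that forces the Choquet boundary to be everything (UEP for all irreducibles) is precisely what hyperrigidity is supposed to follow from, so the counterexample must exploit the gap between "UEP holds for each individual irreducible representation" and "UEP holds for \emph{all} representations simultaneously" — i.e., the failure of the maximal/universal representation to have the property even though every irreducible one does. Pinning down an operator where this discrepancy is visible, and where $\cB$ is nevertheless type I and singly generated, is the delicate part; I expect the construction to hinge on a subtle spectral coincidence (e.g., a point where the "character part" of the spectrum meets the support of the finite-dimensional or compact part) that makes the direct-integral/direct-sum decomposition fail to be "continuous" in the sense needed for hyperrigidity. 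Once the operator is correctly chosen, computing $C^*(T)$, its spectrum, and checking the UEP for each piece should be routine, and the non-hyperrigidity will come from writing down an explicit dilation of $\cA$ that is not a dilation of $\cB$.
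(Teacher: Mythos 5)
Your strategy coincides with the paper's at the level of architecture: a rank-one perturbation of a unitary whose generated C*-algebra is an extension of $C(\mathbb{T})$ by the compacts, the reformulation of hyperrigidity via the unique extension property and maximality of dilations, and the correct diagnosis that the counterexample must exploit the gap between ``every \emph{irreducible} representation restricts with the UEP'' and ``every representation does.'' However, the proposal stops at the level of a plan, and the two places where you defer the work are precisely where the content lies. First, no operator is specified. The paper takes $\mathcal{H}=\ell^2(\mathbb{Z})\oplus\mathbb{C}\eta$ and $T=\left[\begin{smallmatrix} U & F \\ 0 & 0\end{smallmatrix}\right]$ with $U$ the bilateral shift and $F=\Theta_{e_0,\eta}$; the essential design feature is that $T$ is \emph{visibly a non-trivial dilation of $U$} in block upper-triangular form. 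Consequently the representation $\sigma=\tau\circ q$ of $\mathcal{B}$ that kills the compacts and sends $T\mapsto U$ on $\ell^2(\mathbb{Z})$ is non-maximal on $\mathcal{A}$ (the identity representation dilates it with a non-zero corner), and that single observation is the whole of part (b) --- a representation-level witness rather than the u.c.p.-sequence you propose to construct, though the two are equivalent by Arveson's Theorem 2.1. Note that $\sigma$ is a direct integral of the characters $\tau_z$, each of which \emph{is} a boundary representation: this is exactly how the two halves of the theorem coexist.

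Second, and more seriously, you describe checking the UEP for each irreducible as ``routine.'' For the identity representation it is (Arveson's boundary theorem applies because $\|T\|=\sqrt{2}$ while $\|q(T)\|=1$), but for the characters $\tau_z$, $z\in\mathbb{T}$, it is the technical heart of the argument, and your proposal contains no mechanism for it. The paper's mechanism is asymptotic: since $T^{n+1}=U^nT$, the powers $T^n$ tend to $0$ in the weak operator topology in \emph{every} amplification of the identity representation, whereas $|\tau_z(T^n)|=1$ for all $n$. Given a maximal dilation $\rho$ of $\tau_z|_{\mathcal{A}}$ with $*$-extension $\pi=\pi_{\mathbb{K}}\oplus\pi_{C(\mathbb{T})}$, the estimate $1=|\langle\rho(T^n)\xi,\xi\rangle|\le|\langle\rho(T^n)Q\xi,Q\xi\rangle|+\|R\xi\|^2$ forces the compact-supported component of $\xi$ to vanish, so the dilation factors through $C(\mathbb{T})$, where it must be trivial because a unitary admits no non-trivial unitary dilation. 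Without some such quantitative ingredient --- something that distinguishes the behaviour of $\mathcal{A}$ in the identity representation from its behaviour in the characters --- the claim that the characters are boundary representations does not close, and it is exactly the step a priori most in doubt, since the non-maximal representation $\sigma$ is built out of these very characters.
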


Hyperrigidity was first defined by Arveson in \cite{Arv11} as a non-commuta\-tive analogue of a rigidity of approximations in classical approximation theory \cite{Kor53, Sas67, BK78} (see \cite[Definition 5.2]{DK21}). In \cite[Theorem 2.1]{Arv11} Arveson showed that a unital operator algebra $\mathcal{A}$ is hyperrigid in $\mathcal{B}$ if and only if every $*$-representation $\pi$ of the generated C*-algebra $\mathcal{B}$ has a unique completely positive extension when restricted to $\mathcal{A}$. Irreducible $*$-representations $\pi$ for which $\pi|_{\mathcal{A}}$ has this property are called \emph{boundary representations}, and together they form the \emph{non-commutative Choquet boundary}. The non-commutative Choquet boundary is a generalization of classical Choquet boundary of a uniform algebra, defined as the set of points with unique representing measures (see for instance \cite{Gam69}).

Hence, an obstruction to hyperrigidity occurs if non-commutative Choquet boundary is \emph{properly} contained in the spectrum of $\mathcal{B}$. This obstruction is a non-commuta\-tive analogue of a complete obstruction to rigidity of approximations in the classical setting due to \v{S}a\v{s}kin \cite{Sas67} (see \cite[Theorem 5.3]{DK21} for an extended version), which roughly states that rigidity of approximations holds if and only if the Choquet boundary is the whole space. By analogy, Arveson conjectured \cite[Conjecture 4.3]{Arv11} that the above obstruction to hyperrigidity is the \emph{only} obstruction. 

Theorem \ref{t:hr-fails} illustrates that in order to obtain a non-commutative generalization of \v{S}a\v{s}kin's theorem, hyperrigidity will have to be replaced with a different kind of rigidity of approximations. We discovered our counterexample by attempting to extend results of the second-named author with Salomon \cite{DS18} and describe all boundary representations of tensor algebras in the sense of Muhly and Solel \cite{MS88}. These results will appear in a separate forthcoming paper.

\section{Background} \label{s:background}

We discuss some of the background on non-commutative boundary theory. We refer the reader to \cite{Pau02, Arv+} for some of the necessary prerequisites needed for our paper. A \emph{unital operator algebra} $\mathcal{A}$ is a norm-closed unital subalgebra of bounded operators on Hilbert space, and we denote by $\mathcal{B}=C^*(\mathcal{A})$ the C*-algebra generated by $\mathcal{A}$. A \emph{representation} $\rho : \mathcal{A} \rightarrow \mathbb{B}(\mathcal{H})$ of $\mathcal{A}$ is a unital completely contractive homomorphism. We say a representation $\rho$ has the \emph{unique extension property} if there is a unique completely positive extension of $\rho$ to $\mathcal{B}$, and this extension is multiplicative. An irreducible $*$-representation $\pi$ of $\mathcal{B}$ is called a \emph{boundary representation} if $\pi|_{\mathcal{A}}$ has the unique extension property.

For the purpose of finding a counterexample to Arveson's hyperrigidity conjecture, we may restrict our attention to unital operator algebras instead of operator systems since any representation of a unital operator algebra $\mathcal{A}$ extends \emph{uniquely} to a unital completely positive map on the operator system $\mathcal{S} = \overline{\mathcal{A} + \mathcal{A}^*}$. Hence, a counterexample of a separable unital operator algebra would provides us with a counterexample of a separable operator system generating the same C*-algebra. Moreover, we also have by \cite[Theorem 2.1]{Arv11} that a separable unital operator algebra $\mathcal{A}$ is hyperrigid in its generated C*-algebra $\mathcal{B}$ if and only if for every $*$-representation $\pi$ of $\mathcal{B}$, the restriction $\pi|_{\mathcal{A}}$ has the unique extension property.

Suppose now that $\phi : \mathcal{A} \rightarrow \mathbb{B}(\mathcal{H})$ is a representation of an operator algebra $\mathcal{A}$. A representation $\psi : \mathcal{A} \rightarrow \mathbb{B}(\mathcal{K})$ is said to \emph{dilate} $\phi$ if there is an isometry $V : \mathcal{H} \rightarrow \mathcal{K}$ such that $\phi(a) = V^*\psi(a)V$ for all $a\in \mathcal{A}$. In this way we identify $\mathcal{H}$ as a Hilbert subspace of $\mathcal{K}$, and by Sarason's lemma \cite[Exercise 7.6]{Pau02} we can identify $\mathcal{H}$ as a semi-invariant subspace of $\mathcal{K}$, allowing us to write
$$
\psi(a) = \begin{bmatrix}
* & * & * \\
0 & \phi(a) & * \\
0 & 0 & *
\end{bmatrix}.
$$

The connection between the unique extension property and dilation of representations manifests itself through the notion of maximality. We say that a representation $\phi$ of $\mathcal{A}$ is \emph{maximal} if for any dilation $\psi$ of $\phi$ we have that $\psi = \phi \oplus \phi'$ for some representation $\phi'$. Building on ideas of Agler \cite{Agl88} and of Muhly and Solel \cite{MS98}, Dritschel and McCullough \cite{DM05} showed that a representation $\rho$ of $\mathcal{A}$ has the unique extension property if and only if it is maximal, and that any representation dilates to a maximal representation (see also \cite{Arv+}). These results are of fundamental importance in non-commutative boundary theory

\section{A Type I counterexample} \label{s:typeI}

Our counterexample will be comprised of a unital non-self-adjoint algebra $\mathcal{A}$ generated by a rank-one perturbation of a unitary, and a generated C*-algebra $\mathcal{B}$ of type I. For two vectors $\xi,\eta$ in a Hilbert space, we will denote by $\Theta_{\xi,\eta}$ the rank-one operator given by $\Theta_{\xi,\eta}(\zeta) = \langle \zeta,\eta\rangle \cdot \xi$. Let $U : \ell^2(\mathbb{Z}) \rightarrow \ell^2(\mathbb{Z})$ be the bilateral shift, where $\{e_n\}_{n\in \mathbb{Z}}$ is the standard orthonormal basis. Denote $\mathcal{H} := \ell^2(\mathbb{Z}) \oplus \mathbb{C}\eta$, where $\eta$ is a unit vector, and let $F:=\Theta_{e_0, \eta}$. Define an operator $T$ on $\mathcal{H}$ by
$$
T:= \begin{bmatrix}
U & F \\
0 & 0
\end{bmatrix} = \begin{bmatrix} 
U & 0 \\
0 & 1
\end{bmatrix} + \begin{bmatrix} 
0 & F \\
0 & -1
\end{bmatrix},
$$ 
which is a rank-one perturbation of a unitary on $\mathcal{H}$. 

Let $\mathcal{A}$ be the unital operator algebra generated by $T$ and let $\mathcal{B}$ be the C*-algebra generated by $T$. Note that 
$$
TT^* - 1 = \begin{bmatrix} FF^* & 0 \\ 0 & -1 \end{bmatrix}
$$ 
where $FF^*$ is a projection, so that $\|T\| = \sqrt{2}$, and the projection $P = \frac{(TT^* - 1)^2 - TT^* + 1}{2}$ from $\mathcal{H}$ onto $\mathbb{C}\eta$ is an element in $\mathcal{B}$.

\begin{lemma} \label{l:weakly-to-zero}
For any set $I$ and two vectors $(\xi_i),(\eta_i) \in \oplus_{i\in I}\mathcal{H}$ we have,
$$
\sum_{i\in I}\langle T^n\xi_i,\eta_i \rangle = \langle (\oplus_{i\in I}T^n) (\xi_i),(\eta_i) \rangle \underset{n\rightarrow \infty}{\longrightarrow} 0.
$$
\end{lemma}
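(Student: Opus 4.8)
The plan is to reduce the claim to a statement about a single copy of $\cH$ and then to a density argument. First I would observe that by Cauchy--Schwarz (or simply by boundedness of $\oplus_{i\in I}T^n$, noting $\|T^n\|\le \|T\|^n$ is not bounded, so one must be more careful) it suffices to prove the pointwise statement $\langle T^n\xi,\eta\rangle \to 0$ for every fixed $\xi,\eta\in\cH$, together with a uniform bound allowing one to pass the limit through the sum. Actually the cleanest route is to show directly that $\langle (\oplus_i T^n)(\xi_i),(\eta_i)\rangle \to 0$ by first checking it on a dense set of vectors $(\xi_i)$ and $(\eta_i)$ with finitely many nonzero coordinates, each lying in a convenient dense subspace of $\cH$, and then approximating; for this one needs that the operators $\oplus_i T^n$ are uniformly bounded in $n$, which fails in general since $\|T\|=\sqrt2>1$. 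So the real first step is to understand $T^n$ well enough to see that it does \emph{not} blow up on the relevant subspace.

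Next I would compute $T^n$ explicitly using the block form $T=\begin{bmatrix} U & F\\ 0&0\end{bmatrix}$. Since the bottom row is zero, one gets $T^n = \begin{bmatrix} U^n & U^{n-1}F \\ 0 & 0\end{bmatrix}$ for $n\ge 1$. Here $U^{n-1}F = \Theta_{e_{n-1},\eta}$, which has norm $1$, and $U^n$ is unitary of norm $1$; hence in fact $\|T^n\|\le 2$ for all $n\ge1$, so the family $\{\oplus_i T^n\}_n$ \emph{is} uniformly bounded (by $2$) on $\oplus_{i\in I}\cH$. This is the key point that makes the density argument legitimate. Then it suffices to prove $\langle T^n\xi,\eta\rangle\to 0$ for $\xi,\eta$ in the dense subspace consisting of vectors whose $\ell^2(\Zz)$-component has finite support (plus the $\Cz\eta$ component). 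For such vectors, writing $\xi = (x,s\eta)$ and $\eta=(y,t\eta)$ with $x,y$ finitely supported in $\ell^2(\Zz)$, one has $\langle T^n\xi,\eta\rangle = \langle U^n x, y\rangle + s\langle e_{n-1},y\rangle$, and both terms vanish for $n$ large: the first because $U^n x$ is supported far away from the fixed finite support of $y$, the second because $e_{n-1}\perp y$ once $n-1$ exceeds $\max\supp(y)$. Thus the pointwise limit holds on a dense set, and combined with the uniform bound $\|\oplus_i T^n\|\le 2$ and a standard $\eps/3$ argument, the full statement follows.

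The main obstacle is the one already flagged: since $\|T\|=\sqrt 2$, naively one would expect $\|T^n\|$ to grow like $2^{n/2}$, which would destroy any density/approximation scheme. The whole proof hinges on noticing that the nilpotent-like block structure of $T$ forces $T^n$ to stay bounded (indeed $\|T^n\|\le 2$), so that $T$ is power-bounded even though it is not a contraction. Once that is in hand, the rest is routine: a uniform bound plus pointwise convergence on a dense set gives convergence on the whole direct sum, uniformly over the index set $I$ because the bound $2$ does not depend on $I$. I would also remark that the stated convergence says precisely that $\oplus_{i\in I}T^n \to 0$ in the weak operator topology as $n\to\infty$, which is how the lemma will be used later (hence the name).
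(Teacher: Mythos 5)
Your proof is correct and follows essentially the same route as the paper: both arguments rest on the observation that the block structure forces $T^n$ to be uniformly bounded (the paper gets this from the identity $T^{n+1}=U^nT$, you from the explicit formula $T^n=\bigl[\begin{smallmatrix} U^n & \Theta_{e_{n-1},\eta}\\ 0&0\end{smallmatrix}\bigr]$), after which the claim reduces to weak-operator convergence of $U^n$ to $0$. Your $\eps/3$ density argument over the direct sum is just an unpacking of the paper's appeal to the coincidence of the $\sigma$-weak and weak operator topologies on bounded sets, so there is nothing further to add.
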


\begin{proof}
To prove the lemma, it is equivalent to show that $\{T^n\}_{n\in \mathbb{N}}$ converges to $0$ in the $\sigma$-weak topology. We abuse notation and consider $U$ as an operator on $\mathcal{H}$ by extending it to be $0$ on $\mathbb{C}\eta$. Then, we may write $T^{n+1} = U^nT$, so that $\|T^n\| = \sqrt{2}$ for all $n \in \mathbb{N}$ and the sequence $\{T^n\}_{n\in \mathbb{N}}$ is bounded. Since the $\sigma$-weak and weak operator topologies coincide on bounded sets, it will suffice to show that $\{T^n\}_{n\in \mathbb{N}}$ converges to $0$ in the weak operator topology. Let $\xi,\eta \in \mathcal{H}$, and write
$$
\langle T^{n+1} \xi,\eta \rangle = \langle U^nT\xi,\eta \rangle = \langle U^nT\xi,(1-P)\eta \rangle.
$$
Since $\{U^n\}_{n\in \mathbb{N}}$ on $\ell^2(\mathbb{Z})$ converges to $0$ in the weak operator topology, by taking $\xi' :=T\xi$ and $\eta':=(1-P)\eta$ we get that $\langle U^n\xi',\eta' \rangle \rightarrow 0$. Hence, $\{T^n\}_{n \in \mathbb{N}}$ converges to $0$ in the $\sigma$-weak topology.
\end{proof}

\begin{lemma}
The compact operators $\mathbb{K}(\mathcal{H})$ are an ideal in $\mathcal{B}$. In particular, the identity representation of $\mathcal{B}$ is irreducible.
\end{lemma}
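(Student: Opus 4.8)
The plan is to show that $\mathbb{K}(\mathcal{H}) \subseteq \mathcal{B}$; once this is established, the ideal property is automatic, since $\mathbb{K}(\mathcal{H})$ is already an ideal in the larger algebra $\mathbb{B}(\mathcal{H}) \supseteq \mathcal{B}$. To get $\mathbb{K}(\mathcal{H}) \subseteq \mathcal{B}$ it suffices to exhibit inside $\mathcal{B}$ every rank-one operator $\Theta_{u,v}$ with $u,v$ ranging over the orthonormal basis $\{e_n\}_{n\in\mathbb{Z}} \cup \{\eta\}$ of $\mathcal{H}$, since the closed linear span of these rank-one operators is exactly $\mathbb{K}(\mathcal{H})$.

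The first step is to produce a single rank-one operator. Since the projection $P = \Theta_{\eta,\eta}$ onto $\mathbb{C}\eta$ lies in $\mathcal{B}$ (as noted just above the lemma), a direct computation gives $TP = \Theta_{e_0,\eta}$, using that $T\eta = F\eta = e_0$. Hence $\Theta_{e_0,\eta}\in\mathcal{B}$, as is its adjoint $\Theta_{\eta,e_0}$, and therefore so is the product $\Theta_{e_0,\eta}\,\Theta_{\eta,e_0} = \Theta_{e_0,e_0}$, the rank-one projection onto $\mathbb{C}e_0$.

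Next I would propagate this along the basis using $T$ and $T^*$, which act on $\{e_n\}_{n\in\mathbb{Z}}$ essentially as the bilateral shift. Left multiplication by $T$ moves the range vector forward, $T\,\Theta_{e_n,e_0} = \Theta_{e_{n+1},e_0}$ for $n\ge 0$, so by induction $\Theta_{e_n,e_0}\in\mathcal{B}$ for all $n\ge 0$. For negative indices one uses $T^*$, with one caveat: $T^*e_0 = e_{-1}+\eta$ rather than $e_{-1}$, so I would first compress by $1-P\in\mathcal{B}$, obtaining $(1-P)T^*\,\Theta_{e_0,e_0} = \Theta_{e_{-1},e_0}$, after which $T^*e_{-k} = e_{-k-1}$ for $k\ge 1$ and plain left multiplication by $T^*$ suffices. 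This yields $\Theta_{e_n,e_0}\in\mathcal{B}$ for every $n\in\mathbb{Z}$. Taking adjoints and forming products $\Theta_{e_n,e_0}\,\Theta_{e_0,e_m} = \Theta_{e_n,e_m}$ and $\Theta_{e_n,e_0}\,\Theta_{e_0,\eta} = \Theta_{e_n,\eta}$ (and their adjoints), together with $P = \Theta_{\eta,\eta}$, produces every rank-one operator between basis vectors, hence $\mathbb{K}(\mathcal{H})\subseteq\mathcal{B}$.

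Finally, for the last assertion: since $\mathcal{B}\supseteq\mathbb{K}(\mathcal{H})$, the commutant satisfies $\mathcal{B}'\subseteq\mathbb{K}(\mathcal{H})' = \mathbb{C}\,1$, so the identity representation of $\mathcal{B}$ is irreducible. I expect the only genuine obstacle to be the index bookkeeping in the third step, namely the fact that $T^*$ fails to act as a clean shift precisely at index $0$; inserting the projection $P\in\mathcal{B}$ at that one spot is what makes the induction go through, and everything else is a routine computation.
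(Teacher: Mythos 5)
Your proof is correct and follows essentially the same route as the paper: both arguments reduce to showing $\mathbb{K}(\mathcal{H})\subseteq\mathcal{B}$ by manufacturing all rank-one operators between basis vectors from $P$, $T$, $T^*$ and $1-P$ (the paper packages the same computations as ``$\eta$ is cyclic, so $\Theta_{e_n,\eta}=S_nP\in\mathcal{B}$''), after which the ideal property and irreducibility are automatic. The minor caveat you flag at index $0$, handled by compressing with $1-P$, is exactly the step $e_{-1}=(1-P)T^*e_0$ in the paper's argument.
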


\begin{proof}
First we show that $\eta$ is a cyclic vector for $\mathcal{B}$. Indeed, $\mathcal{B}\eta$ contains the linear span of $\{e_n\}_{n\in \mathbb{N}}$, and since $e_0 \in \mathcal{B}\eta$ we also get that $e_{-1} = (1-P)T^*(e_0) \in \mathcal{B}\eta$, so that $e_{-n} = (T^*)^{-n+1}(e_{-1}) \in \mathcal{B}\eta$ for all $n\in \mathbb{N}$. Hence, $\overline{\mathcal{B}\eta} = \mathcal{H}$, and $\eta$ is cyclic.

Next, since for all $n\in \mathbb{Z}$ we have $e_n \in \mathcal{B}\eta$, there is some $S_n\in \mathcal{B}$ such that $S_n\eta = e_n$, and we get that the rank-one operator $\Theta_{e_n,\eta} = S_nP$ is in $\mathcal{B}$ for any $n \in \mathbb{Z}$. Thus, on top of $\Theta_{e_n,\eta}\in \mathcal{B}$, we also get that $\Theta_{e_n,e_m} = \Theta_{e_n,\eta}\Theta_{e_m,\eta}^* \in \mathcal{B}$, $\Theta_{\eta,\eta} = \Theta_{e_n,\eta}^*\Theta_{e_n,\eta} \in \mathcal{B}$ and $\Theta_{\eta,e_n} =\Theta_{e_n,\eta}^* \in \mathcal{B}$ for all $n,m\in \mathbb{Z}$. Thus, since the linear span of these rank-one operators is dense in $\mathbb{K}(\mathcal{H})$, we get that $\mathbb{K}(\mathcal{H})$ is an ideal of $\mathcal{B}$.
\end{proof}

Since $T$ is a rank-one perturbation of a unitary, by the previous lemma we have the following split short exact sequence
$$
0 \rightarrow \mathbb{K}(\mathcal{H}) \rightarrow \mathcal{B} \rightarrow C(\mathbb{T}) \rightarrow 0.
$$
The C*-algebra $\mathcal{B}$ is an extension of $C(\mathbb{T})$ by an ideal of compact operators $\mathbb{K}(\mathcal{H})$ on separable Hilbert space. Hence, it follows that $\mathcal{B}$ is a type I C*-algebra (see \cite[Theorem 1]{Gli61}).

Let $q : \mathcal{B} \rightarrow C(\mathbb{T})$ be the natural quotient map sending $T$ to the identity function $z\mapsto z$ on $\mathbb{T}$. According to \cite[Theorem 1.3.4]{Arv76} and the paragraph preceding it, the spectrum of $\mathcal{B}$ is made up of the identity representation $\iota : \mathcal{B} \rightarrow \mathbb{B}(\mathcal{H})$ (see \cite[Page 20, Corollary 2]{Arv76}), and evaluations $\tau_z:= \ev_z \circ q : \mathcal{B} \rightarrow \mathbb{B}(\mathbb{C})$ by unimodular scalars $z\in \mathbb{T}$.

\begin{proof}[Proof of Theorem \ref{t:hr-fails}]
$\mathcal{A}$ is generated by $T$ and we saw already that $\mathcal{B} = C^*(T)$ is type I. Thus, we need to show that $\mathcal{A}$ is not hyperrigid in $\mathcal{B}$ while the restriction of $\mathcal{A}$ of all irreducible $*$-representations of $\mathcal{B}$ have the unique extension property.

First, we show that $\mathcal{A}$ is not hyperrigid in $\mathcal{B}$. Due to the equivalence between maximality and the unique extesnion property, it suffices to exhibit a $*$-representation $\sigma$ of $\mathcal{B}$ such that $\sigma|_{\mathcal{A}}$ admits a non-trivial dilation. Let $\tau : C(\mathbb{T}) \rightarrow \mathbb{B}(\ell^2(\mathbb{Z}))$ be the $*$-representation of the bilateral shift given by $z\mapsto U$ which lifts to a $*$-representation $\sigma:= \tau \circ q$ of $\mathcal{B}$. Then, by construction we have for all $A\in \mathcal{A}$ that
$$
\iota(A) = \begin{bmatrix}
\sigma(A) & * \\
0 & * 
\end{bmatrix},
$$
so that $\iota|_{\mathcal{A}}$ dilates $\sigma|_{\mathcal{A}}$. This dilation is clearly non-trivial because $T$ has a non-trivial $(1,2)$ corner, so that $\sigma|_{\mathcal{A}}$ admits a non-trivial dilation.

Second, we show that all irreducible $*$-representations of $\mathcal{B}$ are boundary representations for $\mathcal{A}$. It is clear that the quotient map $q: \mathcal{B} \rightarrow C(\mathbb{T})$ is not isometric on $\mathcal{A}$ because the norm of $T$ becomes strictly smaller. Hence, by Arveson's boundary theorem (see for instance \cite{Dav81}) we have that the identity representation $\iota$ is a boundary representation.

Now, let $z \in \mathbb{T}$ be a unimodular scalar. We will show that $\tau_z : \mathcal{B} \rightarrow \mathbb{B}(\mathbb{C})$ is a boundary representation by showing that $\tau_z|_{\mathcal{A}}$ is maximal. By the Dritschel--McCullough dilation theorem \cite{DM05} (see also \cite{Arv+}) we know that $\tau_z|_{\mathcal{A}}$ admits a dilation to a maximal representation $\rho : \mathcal{A} \rightarrow \mathbb{B}(\mathcal{L})$ with $\xi \in \mathcal{L}$ a unit vector which is the image of $1$ under the dilating isometry $V : \mathbb{C} \rightarrow \mathcal{L}$. Let $\pi$ be the extension of $\rho$ to a $*$-representation, guaranteed by the unique extension property for $\rho$. We assume without loss of generality that $\mathcal{L}$ is the smallest reducing subspace containing $\rho(\mathcal{A})\xi$. Then, by the paragraph preceding \cite[Theorem 1.3.4]{Arv76} we have that $\pi = \pi_{\mathbb{K}} \oplus \pi_{C(\mathbb{T})}$ where $\pi_{\mathbb{K}}$ is a multiple of the identity representation of $\mathcal{B}$ (see \cite[Page 20, Corollary 1]{Arv76}), and $\pi_{C(\mathbb{T})}$ is a representation that annihilates the ideal of compact operators $\mathbb{K}(\mathcal{H})$. Let $\mathcal{L}_{\mathbb{K}} \oplus \mathcal{L}_{\mathbb{T}}$ be the orthogonal decomposition of $\mathcal{L}$ with respect to the decomposition $\pi = \pi_{\mathbb{K}} \oplus \pi_{C(\mathbb{T})}$, and let $Q$ and $R$ be the orthogonal projections onto $\mathcal{L}_{\mathbb{K}}$ and $\mathcal{L}_{\mathbb{T}}$ respectively. 

On the one hand, since $\rho$ dilates $\tau_z$ on $\mathcal{A}$ we have that $|\langle \rho(T^n)\xi,\xi \rangle | = |\tau_z(T^n)| = 1$ for all $n\in \mathbb{N}$. On the other hand if $Q\xi \neq 0$, since the sequence $\{\rho(T^n)\}_{n\in \mathbb{N}}$ on $\mathcal{L}_{\mathbb{K}} = Q \mathcal{L}$ is unitarily equivalent to a multiple of $\{T^n\}_{n \in \mathbb{N}}$ on $\mathcal{H}$, by Lemma \ref{l:weakly-to-zero} we get $\langle \rho(T^n)Q\xi,Q\xi \rangle \rightarrow 0$. Hence,
$$
1 = |\langle \rho(T^n)\xi,\xi \rangle| \leq |\langle \rho(T^n)Q\xi,Q\xi \rangle| + |\langle \rho(T^n)R\xi,R\xi \rangle| \rightarrow 
$$
$$
\limsup_n |\langle \rho(T^n)R\xi,R\xi \rangle| \leq \|R \xi \|^2 < 1,
$$
where the last strict inequality follows by the Pythagorean theorem, and the second to last inequality follows from Cauchy-Schwarz inequality together with the fact that $R\rho(T^n)R = \pi_{C(\mathbb{T})}(T^n)$ is a contraction. Thus, since $\mathcal{L}$ is the smallest reducing subspace containing $\rho(\mathcal{A})\xi$, we must have that $Q \xi = 0$ and $\mathcal{L} = \mathcal{L}_{\mathbb{T}}$, so that $\pi = \pi_{C(\mathbb{T})}$ annihilates $\mathbb{K}(\mathcal{H})$. But now, since $\pi(T) = \rho(T)$ is a \emph{unitary} dilation of the unitary $\tau_z(T)$, we must have that $\tau_z|_{\mathcal{A}}$ is a direct summand of $\rho$. Thus, $ \tau_z|_{\mathcal{A}} = \rho$ is maximal, and $\tau_z$ is a boundary representation for all $z \in \mathbb{T}$.
\end{proof}

This shows that Arveson's hyperrigidity conjecture fails for separable unital commutative operator algebras generating type I C*-algebras. Hence, Arveson's notion of hyperrigidity will need to be replaced with a different kind of non-commutative rigidity of approximations if we are to obtain a non-commutative generalization of \v Sa\v skin's theorem.

\vspace{6pt}

\textbf{Acknowledgments.}
The authors are grateful to Kevin Aguyar Brix, Rapha\"el Clou\^atre, Lucas Hall, Michael Hartz, Ken Davidson, Travis Russell, Guy Salomon and Orr Shalit for comments, remarks and suggestions on preliminary versions of this paper. We especially thank Lucas Hall for organizing the Dilation Theory learning seminar which took place at the Technion in the Fall semester of 2023-2024.


\end{document}